\title{Singularities of the representation variety of the braid group on 3 strands}
\author{Kevin De Laet}
\address{Department of Mathematics, University of Antwerp \\ 
 Middelheimlaan 1, B-2020 Antwerp (Belgium) \\ {\tt kevin.delaet2@uantwerpen.be}}
\date{}
\tikzset{
  vertice/.style={circle,draw=black},
  decoration={markings,mark=at position 0.5 with {\arrow{>}}}
}
\theoremstyle{plain}
\newcommand{\wis}[1]{{\text{\em \usefont{OT1}{cmtt}{m}{n} #1}}}
\newcommand{\C}{\mathbb{C}}
\newcommand{\Z}{\mathbb{Z}}
\newcommand{\mA}{\mathcal{A}}
\newcommand{\V}{\mathbf{V}}
\newtheorem{theorem}{Theorem}
\newtheorem{lemma}[theorem]{Lemma}
\newtheorem{proposition}[theorem]{Proposition}
\newtheorem{corollary}[theorem]{Corollary}
\newtheorem{remark}[theorem]{Remark}
\DeclareMathOperator{\Ext}{Ext}
\DeclareMathOperator{\End}{End}
\DeclareMathOperator{\Hom}{Hom}
\DeclareMathOperator{\im}{Im}
\numberwithin{equation}{section}
\begin{document}
\begin{abstract}
The singularities of $\wis{rep}_n B_3$ are studied, where $B_3$ is the knot group on 3 strands. Specifically, we determine which semisimple representations are smooth points of $\wis{rep}_n B_3$.
\end{abstract}
\maketitle
\begin{figure}[h]
\begin{center}
\begin{tikzpicture}[scale = 3]
   \node[vertice] (b1) at ( 0, 0) {$a$};
   \node[vertice] (b2) at ( 0, -1) {$b$};
   \node[vertice] (b3) at ( 1, 0.5) {$x$};
   \node[vertice] (b4) at ( 1, -0.5) {$y$};
   \node[vertice] (b5) at ( 1, -1.5) {$z$};
   \path[->] (b1) edge (b3);
   \path[->] (b1) edge (b4);
   \path[->] (b1) edge (b5);
   \path[->] (b2) edge (b3);
   \path[->] (b2) edge (b4);
   \path[->] (b2) edge (b5);
\end{tikzpicture}  \captionof{figure}{The quiver $Q$ for $\Gamma$}
	\label{pict:modulargroup}
	\end{center}
\end{figure}
\begin{figure}[h]
\begin{center}
\begin{tikzpicture}[scale = 3]
   \node[vertice] (a0) at ( 0, 1) {$a_0$};
   \node[vertice] (a1) at ( -0.866, 0.5) {$a_1$};
   \node[vertice] (a2) at ( -0.866, -0.5) {$a_2$};
   \node[vertice] (a3) at ( 0, -1) {$a_3$};
   \node[vertice] (a4) at ( 0.866, -0.5) {$a_4$};
   \node[vertice] (a5) at ( 0.866, 0.5) {$a_5$};
   \path[->] (a0) edge[bend right=30] (a1);
   \path[->] (a1) edge[bend right=30] (a0);
   \path[->] (a1) edge[bend right=30] (a2);
   \path[->] (a2) edge[bend right=30] (a1);
   \path[->] (a2) edge[bend right=30] (a3);
   \path[->] (a3) edge[bend right=30] (a2);
   \path[->] (a3) edge[bend right=30] (a4);
   \path[->] (a4) edge[bend right=30] (a3);
   \path[->] (a4) edge[bend right=30] (a5);
   \path[->] (a5) edge[bend right=30] (a4);
   \path[->] (a5) edge[bend right=30] (a0);
   \path[->] (a0) edge[bend right=30] (a5);
\end{tikzpicture}  \captionof{figure}{The one quiver $Q'$ for $\Gamma$}
	\label{pict:onequivermodular}
	\end{center}
\end{figure}
\keywords{representation variety, braid group}
\footnote{\textit{2000 Mathematics Subject Classification:}20F36}
\section{Introduction}
The braid group $B_3 = \langle \sigma_1,\sigma_2|\sigma_1 \sigma_2 \sigma_1 =\sigma_2 \sigma_1 \sigma_2  \rangle = \langle X,Y|X^2=Y^3\rangle$ on 3 strands plays an important part in knot theory, for example its representations played an important role in \cite{bruyn2011matrix} for determining knot-vertibility. In order to study the representation variety
$$
\wis{rep}_n B_3 = \{(A,B) \in \wis{GL}_n(\C) \times \wis{GL}_n(\C)| A^2=B^3\},
$$
one uses the fact that $B_3/(X^2)\cong \Z_2 * \Z_3 = \Gamma$, whose representation theory is decoded by the quiver $Q$ of Figure \ref{pict:modulargroup}, see for example \cite{bruyn2010rationality} or \cite{adriaenssens2003non}. In particular, we have
$$
\wis{rep}_n \Gamma = \{(A,B) \in \wis{GL}_n(\C) \times \wis{GL}_n(\C)| A^2=B^3=1\} = \sqcup_{\alpha} \wis{rep}_\alpha \Gamma
$$ with $\alpha=(a,b;x,y,z)$ a dimension vector for $Q$ fulfilling the requirement $a+b = x+y+z=n$. Consequently, $\wis{rep}_n \Gamma$ is smooth, such that one can compute the local quiver $Q'$ in the semisimple representation
$$
S_{1,1}^{\oplus a_0} \oplus S_{-1,\omega}^{\oplus a_1}\oplus S_{1,\omega^2}^{\oplus a_2} \oplus S_{-1,1}^{\oplus a_3}\oplus S_{1,\omega}^{\oplus a_4}\oplus S_{-1,\omega^2}^{\oplus a_5}
$$
where $S_{\rho,\tau}(\overline{X}) = \rho$, $S_{\rho,\tau}(\overline{Y}) = \tau$ and $\omega$ is a primitive 3rd root unity, which can be found in Figure \ref{pict:onequivermodular}. Questions like `which dimension vectors determine components for which the generic member is simple' and `what is the dimension of $\wis{rep}_\alpha \Gamma//\wis{PGL}_n(\C)$' can then be easily solved using the quiver technology from \cite{le2007noncommutative}.
\par However, $\wis{rep}_n B_3$ is far from smooth. Although $\wis{rep}_n B_3$ is smooth in any simple representation, in the semisimple but not simple representations singularities can occur. The main result, Theorem \ref{maintheorem}, will give a necessary and sufficient conditions for a semisimple representation to be a smooth point of $\wis{rep}_n B_3$. In addition, we show that almost all of these singularities come from the fact that they are intersection points of irreducible components of $\wis{rep}_n B_3$, except in the case that there is a 2-dimensional simple representation with multiplicity $\geq 2$ in the decomposition of the semisimple module.
\begin{remark}
Determining the singularities in semisimple points of $\wis{rep}_n B_3$ is not a full classification of all singularities of $\wis{rep}_n B_3$, as a sum of indecomposables $V$ may be smooth or singular if its semisimplification $V^{ss}$ is singular. However, if $V$ is singular, then its semisimplification $V^{ss}$ is necessarily singular, as $V^{ss} \in \overline{\mathcal{O}(V)}$ and the singular locus is a closed $\wis{PGL}_n(\C)$-subvariety of $\wis{rep}_n B_3$.
\end{remark}
\section{The connection between $B_3$ and $\Gamma$}
As $B_3$ is a central extension of $\Gamma$ with $\Z$
$$
\xymatrix{1\ar[r] & \Z \ar[r] & B_3 \ar[r]& \Gamma \ar[r] & 1}
$$
any simple representation $S$ of $B_3$ has the property that $X^2=Y^3$ acts by scalar multiplication $\lambda I_n$ on $S$ for some $\lambda \in \C^*$.
\par In addition, for each $n$, we have an action of $\C^*$ on $\wis{rep}_n B_3$ defined by
$$
t \cdot (A,B) = (t^3A,t^2B), \text{ for } t \in \C^*, (A,B) \in \wis{rep}_n B_3,
$$ 
which is of course coming from the fact that $\wis{rep}_1 B_3 = \V(x_1^2-x_2^3)\setminus\{(0,0)\} \cong \C^*$, so this action is just the same as taking the tensor product with a 1-dimensional representation of $B_3$. Using this action, we find that any simple representation of $B_3$ can be found in the $\C^*$-orbit of a simple representation of $\Gamma$. Equivalently, the corresponding map
$$
\xymatrix{\C^* \times \wis{irrep}_n \Gamma \ar[r]^-{f_n}& \wis{irrep}_n B_3}
$$
is surjective and 6-to-1, as for $t \in \pmb{\mu}_6=\{\rho \in \C^*| \rho^6=1\}$ and $(A,B) \in \wis{irrep}_n \Gamma$, $t \cdot (A,B)$ is also a simple $\Gamma$-representation. For $S \in \wis{irrep}_n \Gamma, \lambda \in \C^*$, we denote $\lambda S$ for $f_n(\lambda,S)$.
\section{Geometrical interpretation of extensions}
In here we review parts of \cite[Chapter 3 and 4]{le2007noncommutative} which we will need. Given an algebra $\mA$ and a $n$-dimensional representation $V$, we define the normal space to the $\wis{PGL}_n(\C)$-orbit of $V$ in $\wis{rep}_n \mA$ to be the vector space
$$
N_V \wis{rep}_n \mA = \frac{T_V \wis{rep}_n \mA}{T_V \mathcal{O}(V)},
$$
where $\mathcal{O}(V)$ is the $\wis{PGL}_n(\C)$-orbit of $V$. Recall that $\wis{rep}_n \mA$ is a $\wis{PGL}_n(\C)$-variety by base change, that is, if $V$ is determined by the algebra morphism
$$
\xymatrix{\mA \ar[r]^-\rho& \wis{M}_n(\C)},
$$
then $g\cdot \rho$ is the composition
$$
\xymatrix{\mA \ar[r]^-\rho& \wis{M}_n(\C)\ar[r]^-{g(-)g^{-1}}& \wis{M}_n(\C)}.
$$
In \cite[Example 3.13]{le2007noncommutative} it is shown that $N_V \wis{rep}_n \mA = \Ext^1_{\mA}(V,V)$, with for $N,M$ 2 finite dimensional $\mA$-modules with corresponding algebra morphisms $\sigma$ and $\rho$ we define
$$
\Ext^1_{\mA}(N,M)=\frac{\{ \delta \in \Hom_{\C}(\mA,\Hom_{\C}(N,M))|\forall a,a' \in A: \delta(aa') = \rho(a)\delta(a') + \delta(a)\sigma(a')\}}{\{ \delta \in \Hom_{\C}(\mA,\Hom_{\C}(N,M))|\exists \beta \in \Hom_{\C}(N,M): \forall a \in \mA: \delta(a) = \rho(a)\beta - \beta \sigma(a)\}}
$$
The top vector space is called the space of cycles and is denoted by $Z(N,M)$, while the lower vector space is the space of boundaries and is denoted by $B(N,M)$.
\par Equivalently, $\Ext^1_{\mA}(N,M)$ is the set of equivalence classes of short exact sequences of $\mA$-modules
$$
e:\xymatrix{0 \ar[r]& M\ar[r] & P \ar[r] & N \ar[r] & 0}
$$
where 2 exact sequences $(P,e)$ and $(P',e')$ are equivalent if there is an $\mA$-isomorphism $\phi \in \Hom_{\mA}(P,P')$ such that the following diagram is commutative
$$
\xymatrix{0 \ar[r]& M\ar[r] \ar[d]^-{id_M}& P \ar[r]\ar[d]^-{\phi} & N \ar[r] \ar[d]^-{id_N}& 0\\0 \ar[r]& M\ar[r] & P' \ar[r] & N \ar[r] & 0}.
$$
These 2 type of definitions of $\Ext^1_{\mA}(N,M)$ will both occur in this paper.
\section{Calculating extensions}
Following the previous section it is obvious that extensions of finite dimensional $B_3$-modules will play an important role in determining the singular points of $\wis{rep}_n B_3$.
\begin{theorem}
Let $S,T$ be irreducible representations of $\Gamma$ and let $\lambda,\mu \in \C^*$. We have
\begin{enumerate}
\item $\Ext^1_{B_3}(\lambda S, \mu T) = 0$ except if $\frac{\lambda}{\mu} \in \pmb{\mu}_6$,
\item $\Ext^1_{B_3}(\lambda S, \mu T) = \Ext^1_{\Gamma}(S,\frac{\mu}{\lambda} T)$ if $\frac{\mu}{\lambda} \in \pmb{\mu}_6$ and $S \not\cong \frac{\mu}{\lambda}T$, and lastly
\item $\Ext^1_{B_3}(\lambda S, \lambda T) = \Ext^1_{\Gamma}(S,T) \oplus \C$ if $S \cong T$.
\end{enumerate}
\label{th:exten}
\end{theorem}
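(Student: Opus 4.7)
The plan is to work directly with the explicit cycle/boundary description of $\Ext^1$ recalled in Section~3. A $B_3$-derivation $\delta : \C B_3 \to \Hom_{\C}(\lambda S, \mu T)$ is completely determined by the pair $(f,g) = (\delta(X), \delta(Y))$, subject only to the single constraint $\delta(X^2) = \delta(Y^3)$ coming from the defining relation of $B_3$. Tensoring with an appropriate one-dimensional $B_3$-representation is an auto-equivalence of $B_3$-mod, so we may reduce to $\lambda = 1$ throughout; set $\eta := \mu/\lambda$.

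For part~(1), since $X^2 = Y^3$ is central in $B_3$ and acts on $\lambda S$, respectively $\mu T$, as the scalars $\lambda^6 I$, respectively $\mu^6 I$, it acts on any extension $P$ as a block-upper-triangular matrix with diagonal $(\mu^6 I, \lambda^6 I)$. When $\mu^6 \ne \lambda^6$, i.e., $\eta \notin \pmb{\mu}_6$, the two generalised eigenspaces of this central operator are $B_3$-submodules of $P$ and split it, so $\Ext^1_{B_3}(\lambda S, \mu T) = 0$.

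For parts~(2) and (3), assume $\eta \in \pmb{\mu}_6$, so that $\eta T$ is canonically a $\Gamma$-representation $T'$ with $T'(\bar X) = \eta^3 T(\bar X)$ and $T'(\bar Y) = \eta^2 T(\bar Y)$. Given a $B_3$-cycle $\delta$, apply the derivation rule to the centrality identity $a X^2 = X^2 a$ and use $\sigma(X^2) = \rho(X^2) = I$ to deduce that $\delta(X^2) \in \Hom_\Gamma(S, T')$. By Schur's lemma, $\delta(X^2) = 0$ when $S \not\cong T'$, and $\delta(X^2) = c \cdot \phi$ for a fixed isomorphism $\phi : S \xrightarrow{\sim} T'$ and a scalar $c \in \C$ otherwise. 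The vanishing $\delta(X^2) = 0$ is equivalent to $\delta$ being a $\Gamma$-cycle, since it translates exactly to the two $\Gamma$-conditions $\delta(\bar X^2) = 0$ and $\delta(\bar Y^3) = 0$; in addition, every $B_3$-boundary is automatically a $\Gamma$-boundary with the same $(f,g)$. When $S \not\cong T'$ this gives $Z_{B_3}(S, T') = Z_\Gamma(S, T')$ and $B_{B_3}(S, T') = B_\Gamma(S, T')$, proving part~(2).

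In the remaining case $S \cong T'$, which after the reduction corresponds precisely to $\lambda = \mu$ with $S \cong T$, the assignment $\delta \mapsto c$ fits into a short exact sequence
$$
0 \to \Ext^1_\Gamma(S, T) \to \Ext^1_{B_3}(S, T) \to \C \to 0.
$$
Surjectivity is witnessed by the explicit cycle $(f, g) = \bigl(\tfrac{c}{2} S(\bar X), \tfrac{c}{3} S(\bar Y)\bigr)$, which satisfies the cycle condition with $\delta(X^2) = c \cdot I$ because $S(\bar X)^2 = S(\bar Y)^3 = I$. Since every short exact sequence of $\C$-vector spaces splits, we obtain $\Ext^1_{B_3}(S, T) \cong \Ext^1_\Gamma(S, T) \oplus \C$, establishing part~(3). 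The main technical content of the proof lies in the translation of cycle conditions via the centrality of $X^2$ and in exhibiting the explicit cycle realising $c \ne 0$; the rest is routine bookkeeping with the defining relation $X^2 = Y^3$.
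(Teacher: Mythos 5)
Your proof is correct, and it follows the same overall skeleton as the paper's: reduce to $\lambda=1$ by tensoring with a one-dimensional representation, work with the cycle/boundary description, extract the element $c=\delta(X^2)=\delta(Y^3)$, show via the centrality of $X^2$ that $c\in\Hom_\Gamma(S,T')$ and apply Schur's lemma, and exhibit the same explicit cycle $\bigl(\tfrac{c}{2}\sigma(X),\tfrac{c}{3}\sigma(Y)\bigr)$. There are, however, two genuine divergences worth recording. First, for part~(1) the paper explicitly writes down the splitting homomorphism $\beta=c/(\mu^6-1)$ and verifies by direct computation that $\delta$ is its boundary, whereas you argue via the generalized eigenspace decomposition of the central operator $X^2$, which acts with the distinct eigenvalues $\mu^6$ and $\lambda^6$; your version is shorter and more conceptual, the paper's is a self-contained matrix computation. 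Second, and more substantially, in part~(3) the paper only obtains the inequality $\dim\Ext^1_{B_3}(S,S)\leq\dim\Ext^1_\Gamma(S,S)+1$ algebraically and then invokes a geometric argument (finiteness of the fibers of $\C^*\times\wis{irrep}_\alpha\Gamma\to\wis{irrep}_n B_3$ and surjectivity on orbits) to get the reverse inequality. You close the gap purely algebraically: every $B_3$-boundary satisfies $\delta(X^2)=\tau(X^2)\beta-\beta\sigma(X^2)=0$, so the evaluation $\delta\mapsto\delta(X^2)$ descends to a map $\Ext^1_{B_3}(S,S)\to\C$ whose kernel is exactly $\Ext^1_\Gamma(S,S)$ and whose surjectivity is witnessed by the explicit cycle. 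This yields the exact sequence $0\to\Ext^1_\Gamma(S,T)\to\Ext^1_{B_3}(S,T)\to\C\to 0$ and hence the equality of dimensions without any geometric input, which is a genuine simplification of the paper's argument.
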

\begin{proof}
For the first case, we may assume by the action of $\C^*$ that $\lambda = 1$ and $\mu \not\in \pmb{\mu}_6$. Let $\sigma$, respectively $\tau$ be the algebra morphisms from $\C B_3$ to $\End_\C(S)$, respectively $\End_\C(T)$ coming from the representations $S$ and $T$. Let $\delta$ be a cycle, that is, a linear map
$$
\xymatrix{\C B_3 \ar[r]^-\delta & \Hom_\C(S,\mu T)}
$$
such that
$$
\forall a,b \in \C B_3:\delta(ab) =\delta(a) \sigma(b) + (\mu\tau)(a)\delta(b).
$$
We need to prove that there exists a $\beta \in \Hom_\C(S,\mu T)$ such that
$$
\forall a \in \C B_3: \delta(a) =(\mu \tau)(a)\beta-\beta\sigma(a). 
$$
Using the defining relation of $B_3$, we see that 
\begin{align*}
c&=\delta(X^2) = \delta(X)\sigma(X) + \mu^3 \tau(X)\delta(X)\\
 &=\delta(Y^3) = \delta(Y)\sigma(Y)^2 + \mu^2\tau(Y)\delta(Y) \sigma(Y) + \mu^4 \tau(Y)^2\delta(Y).
\end{align*}
Assuming that $\mu \not\in \pmb{\mu}_6$, we find for $\beta = \frac{c}{\mu^6-1} \in \Hom_{\C}(S,\mu T)$
\begin{gather*}
(\mu\tau)(X)\beta - \beta\sigma(X)  \\=\frac{1}{\mu^6-1}(\mu^3\tau(X)\delta(X)\sigma(X) + \mu^6\tau(X)^2\delta(X)\\-\delta(X)\sigma(X)^2-\mu^3\tau(X)\delta(X)\sigma(X))\\
=\delta(X)
\end{gather*}
and
\begin{gather*}
(\mu\tau)(Y)\beta-\beta\sigma(Y) \\
=\frac{1}{\mu^6-1}(\mu^2\tau(Y)\delta(Y)\sigma(Y)^2+\mu^4\tau(Y)^2\delta(Y)\sigma(Y)+\mu^6\tau(Y)^3\delta(Y)\\
-\delta(Y)\sigma(Y)^3-\mu^2\tau(Y)\delta(Y)\sigma(Y)^2 - \mu^4\tau(Y)^2\delta(Y)\sigma(Y))\\
=\delta(Y).
\end{gather*}
So $\delta\subset B(S,\mu T) \subset \Hom_\C(S,\mu T)$ and consequently, $\Ext^1_{B_3}(S, \mu T) = 0$.
\par For the second case, we may again assume that $\lambda = 1$ and in addition that $\mu = 1$, for $\mu T$ is just a $\Gamma$-representation in this case. In order to prove that  $\Ext^1_{B_3}(S, T) = \Ext^1_{\Gamma}(S,T)$ for $S \not\cong T$, we have to prove that each extension of $S$ by $T$ as $B_3$-module is actually a $\Gamma$-module. Equivalently, we have to prove that for any cycle 
$$
\xymatrix{\C B_3 \ar[r]^-\delta& \Hom_\C(S,T)},
$$
we have $\delta(X^2)=0$, as we then get $\delta(X^2-1) = \delta(Y^3-1) = 0$ which makes sure that this extension is a $\Gamma$-module. We have as before
\begin{align*}
c&=\delta(X^2) = \delta(X)\sigma(X) +  \tau(X)\delta(X)\\
 &=\delta(Y^3) = \delta(Y)\sigma(Y)^2 +\tau(Y)\delta(Y) \sigma(Y) + \tau(Y)^2\delta(Y).
\end{align*}
Now we get
\begin{gather*}
c\sigma(X)-\tau(X)c \\= \delta(X)\sigma(X)^2+\tau(X)\delta(X)\sigma(X)\\-\tau(X)\delta(X)\sigma(X)-\tau(X)^2\delta(X)=0
\end{gather*}
and
\begin{gather*}
c\sigma(Y)-\tau(Y)c \\=\delta(Y)\sigma(Y)^3 +\tau(Y)\delta(Y) \sigma(Y)^2 + \tau(Y)^2\delta(Y)\sigma(Y) \\- \tau(Y)\delta(Y)\sigma(Y)^2 -\tau(Y)^2\delta(Y) \sigma(Y) - \tau(Y)^3\delta(Y)=0.
\end{gather*}
Consequently, $c \in \Hom_\Gamma(S,T)$, which is 0 if $S \not\cong T$, so $\delta$ defines a $\Gamma$-module, that is, $\delta \in \Ext^1_\Gamma(S,T)$. The inclusion $\Ext^1_\Gamma(S,T) \subset \Ext^1_{B_3}(S,T)$ is obvious.
\par For the last part, if $S \cong T$, we have for $c$ as in the second case that $c \in \End_\Gamma(S)\cong \C$. In this case, the following matrices define an element of $\Ext^1_{B_3}(S,S)$
$$X \mapsto
\begin{bmatrix}
\sigma(X)& \frac{c}{2} \sigma(X) \\ 0 & \sigma(X)
\end{bmatrix},
Y \mapsto
\begin{bmatrix}
\sigma(Y)& \frac{c}{3} \sigma(Y) \\ 0 & \sigma(Y)
\end{bmatrix}.
$$
So the function
$$
\xymatrix{\C B_3 \ar[r]^-{\delta_\sigma}& \End_\C(S)}
$$
defined by $\delta_\sigma(X) = \frac{c}{2} \sigma(X)$, $\delta_\sigma(Y) = \frac{c}{3} \sigma(Y)$ defines a derivation on $\C B_3$ with $\delta_\sigma(X^2) = c$. But then $\delta - \delta_\sigma \in \Ext^1_\Gamma(S,S)$, which shows that 
$$
\dim_\C \Ext^1_{B_3}(S,S) \leq \dim_\C \Ext^1_\Gamma(S,S)+1.
$$
Let $S \in \wis{rep}_\alpha \Gamma$ for a dimension vector $\alpha$ for $\Gamma$, then the map
$$
\xymatrix{\C^* \times \wis{irrep}_\alpha \Gamma\ar[r]^-{f_\alpha}& \wis{irrep}_n B_3}
$$
has finite fibers and is surjective on $\wis{PGL}_n(\C)$-orbits, from which it follows that 
$$
\dim_\C \Ext^1_{B_3}(S,S) \geq \dim_\C \Ext^1_\Gamma(S,S)+1,
$$
leading to the other inequality.
\end{proof}
\begin{corollary}
For all simple $B_3$-modules $S$ and $T$, we have
$$
\dim \Ext^1_{B_3}(S,T) = \dim \Ext^1_{B_3}(T,S)
$$
\end{corollary}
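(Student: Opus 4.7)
The plan is to reduce the statement to the symmetry of $\Ext^1_\Gamma$ between simple $\Gamma$-modules, and then to derive that from the symmetry of the Euler form of $Q$ on $\Gamma$-dimension vectors.

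First I write the simple $B_3$-modules as $S=\lambda S_0$ and $T=\mu T_0$ with $S_0,T_0$ simple $\Gamma$-modules. By part (1) of Theorem \ref{th:exten}, if $\mu/\lambda\notin\pmb{\mu}_6$ then both sides vanish; if $S\cong T$ there is nothing to prove. Otherwise part (2) gives
$$
\Ext^1_{B_3}(S,T)=\Ext^1_\Gamma(S_0,\tfrac{\mu}{\lambda}T_0), \qquad \Ext^1_{B_3}(T,S)=\Ext^1_\Gamma(T_0,\tfrac{\lambda}{\mu}S_0).
$$
Tensoring with the 1-dimensional $\Gamma$-representation $\lambda/\mu\in\pmb{\mu}_6$ is an autoequivalence of $\Gamma$-rep and carries the second group to $\Ext^1_\Gamma(\tfrac{\mu}{\lambda}T_0,S_0)$. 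Hence I am reduced to proving $\dim\Ext^1_\Gamma(A,B)=\dim\Ext^1_\Gamma(B,A)$ for any two simple $\Gamma$-modules $A,B$.

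For the $\Gamma$-level claim I use that finite-dimensional $\Gamma$-modules are representations of $Q$ with dimension vector $(a,b;x,y,z)$ satisfying $a+b=x+y+z$, and that $\Ext^1_\Gamma$ agrees with the quiver-level $\Ext^1$ (the non-degeneracy of the combined arrow map $V_a\oplus V_b \to V_x\oplus V_y\oplus V_z$ is preserved under extensions by the snake lemma, so this subcategory is closed under extensions). Since $Q$ is acyclic, the standard Euler-form identity from \cite{le2007noncommutative} gives $\dim\Hom-\dim\Ext^1=\chi$ with
$$
\chi(\alpha,\beta)=aa'+bb'+xx'+yy'+zz'-(a+b)(x'+y'+z').
$$
On $\Gamma$-dimension vectors the last term equals $|\alpha|\cdot|\beta|$, so $\chi(\alpha,\beta)=\chi(\beta,\alpha)$. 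Combined with $\dim\Hom_\Gamma(A,B)=\delta_{A,B}=\dim\Hom_\Gamma(B,A)$ for simples, this yields the $\Gamma$-symmetry, and hence the corollary.

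The main obstacle is the bookkeeping in the reduction from $B_3$ to $\Gamma$, in particular applying the tensoring step correctly and verifying that the $\Gamma$-rep subcategory inherits Ext from $\wis{rep}\,Q$; once these are in place the symmetry of $\chi$ on $\Gamma$-dimension vectors is a one-line computation.
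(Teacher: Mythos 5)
Your proof is correct and follows the same strategy as the paper: reduce via Theorem \ref{th:exten} to the symmetry of $\dim\Ext^1_\Gamma$ between simple $\Gamma$-modules, which in both cases ultimately rests on a symmetric Euler form together with Schur's lemma. The only difference is in the last step, where the paper cites the symmetry of the one quiver $Q'$ while you derive the $\Gamma$-level symmetry directly from the Euler form of $Q$ restricted to dimension vectors satisfying $a+b=x+y+z$ --- a slightly more self-contained justification of the same fact (and your ``$\lambda/\mu$'' in the tensoring step should read $\mu/\lambda$, which is immaterial).
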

\begin{proof}
If $S \cong T$ the statement is trivial, so assume $S \not\cong T$. Either there exists a $\lambda \in \C^*$ such that $\lambda S$ and $\lambda T$ are $\Gamma$-modules or not. In the first case, if $S \not\cong T$ it is true because $$\Ext^1_{B_3}(S,T) = \Ext^1_{\Gamma}(\lambda S,\lambda T) = \Ext^1_{\Gamma}(\lambda T,\lambda S) = \Ext^1_{B_3}(T,S),$$ which follows from the symmetry of the one quiver for $\Gamma$. In the second case both $\Ext^1_{B_3}(S,T)$ and $\Ext^1_{B_3}(T,S)$ are 0.
\end{proof}
\section{Components of $\wis{rep}_n B_3$}
\begin{theorem}
Let $\alpha_i = (a_i,b_i;x_i,y_i,z_i)$ for $1\leq i \leq k$ be simple dimension vectors for $\Gamma$, let $n_i = a_i+b_i$ and let $n = \sum_{i=1}^k n_i$.
Then the closure of the image of the map
$$
\xymatrixcolsep{5pc}\xymatrix{\wis{PGL}_n(\C)\times^{\left(\prod_{i=1}^k \wis{GL}_{n_i}(\C)\right)/\C^*}  \left((\C^*)^k \times \prod_{i=1}^k \wis{rep}_{\alpha_i} \Gamma \right)\ar[r]^-{f_{(\alpha_1,\ldots,\alpha_k)}} & \wis{rep}_n(B_3)}
$$
is an irreducible component of $\wis{rep}_n(B_3)$.
\end{theorem}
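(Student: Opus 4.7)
The plan is a standard dimension comparison: verify that the source of $f_{(\alpha_1,\ldots,\alpha_k)}$ is irreducible, compute its dimension, and match it against the tangent space dimension of $\wis{rep}_n B_3$ at a sufficiently generic semisimple point in the image. Since the closure of an irreducible constructible subset whose dimension equals the tangent space dimension at one of its points must coincide with an entire irreducible component, this will suffice.

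For the source dimension, each $\wis{rep}_{\alpha_i}\Gamma$ is irreducible and smooth (it sits inside the smooth $\wis{rep}_{n_i}\Gamma$), and at a simple point $S_i$ one has $\dim\wis{rep}_{\alpha_i}\Gamma = n_i^2-1+\dim\Ext^1_\Gamma(S_i,S_i)$ by the orbit--normal decomposition. The associated bundle is irreducible as a quotient of an irreducible variety, and writing $H=(\prod_i\wis{GL}_{n_i}(\C))/\C^*$ the dimensions telescope:
\begin{align*}
\dim\bigl(\wis{PGL}_n(\C)\times^H\bigl((\C^*)^k\times\textstyle\prod_i\wis{rep}_{\alpha_i}\Gamma\bigr)\bigr)&=(n^2-1)+k+\sum_i\bigl(n_i^2-1+\dim\Ext^1_\Gamma(S_i,S_i)\bigr)-\bigl(\sum_i n_i^2-1\bigr)\\
&=n^2+\sum_i\dim\Ext^1_\Gamma(S_i,S_i).
\end{align*}

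Next I would choose a sufficiently generic image point $V=\lambda_1 S_1\oplus\cdots\oplus\lambda_k S_k$: take the $S_i$ pairwise non-isomorphic simples (possible since each $\wis{rep}_{\alpha_i}\Gamma$ contains a dense open of simples, even when some $\alpha_i$ coincide) and the scalars with $\lambda_j/\lambda_i\notin\pmb{\mu}_6$ for $i\neq j$. Then Theorem \ref{th:exten}(1) gives $\Ext^1_{B_3}(\lambda_iS_i,\lambda_jS_j)=0$ for $i\neq j$, Theorem \ref{th:exten}(3) gives $\dim\Ext^1_{B_3}(\lambda_iS_i,\lambda_iS_i)=\dim\Ext^1_\Gamma(S_i,S_i)+1$, and Schur's lemma yields $\dim\End_{B_3}(V)=k$, so
\begin{align*}
\dim T_V\wis{rep}_n B_3=\dim\mathcal{O}(V)+\dim\Ext^1_{B_3}(V,V)=(n^2-k)+\sum_i\bigl(\dim\Ext^1_\Gamma(S_i,S_i)+1\bigr)=n^2+\sum_i\dim\Ext^1_\Gamma(S_i,S_i),
\end{align*}
matching the source dimension exactly.

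To conclude I need the map $f_{(\alpha_1,\ldots,\alpha_k)}$ to be generically finite onto its image, so that $\dim\overline{\im f_{(\alpha_1,\ldots,\alpha_k)}}$ actually attains the source dimension. For generic $V$ as above, Krull--Schmidt pins down the isotypical decomposition, the $6$-to-$1$ description of $\wis{irrep}_n B_3$ in terms of $\wis{irrep}_n\Gamma$ constrains each pair $(\lambda_i,S_i)$ up to the finite group $\pmb{\mu}_6$, and the residual $\wis{PGL}_n(\C)$-ambiguity is absorbed by the contracted product $\times^H$ together with the stabilizer of $V$; so the generic fiber is finite. Hence $\dim\overline{\im f_{(\alpha_1,\ldots,\alpha_k)}}=n^2+\sum_i\dim\Ext^1_\Gamma(S_i,S_i)=\dim T_V\wis{rep}_n B_3$, and sandwiching $\dim\overline{\im f_{(\alpha_1,\ldots,\alpha_k)}}\le\dim_V Z\le\dim T_V\wis{rep}_n B_3$ for any component $Z\supseteq\overline{\im f_{(\alpha_1,\ldots,\alpha_k)}}$ forces the closure to be such a component. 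The step I expect to require the most care is this generic-finiteness, since the interaction between the $\wis{PGL}_n$-twist, the internal $\wis{GL}_{n_i}(\C)$-isomorphisms of each simple summand, and the $\pmb{\mu}_6$-scalings must be checked to combine so that no additional dimension collapses upon passing to the image.
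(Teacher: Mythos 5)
Your proposal is correct and follows essentially the same route as the paper: compute the dimension of the associated bundle, use generic finiteness of $f_{(\alpha_1,\ldots,\alpha_k)}$ on the locus of simples, and match this against $\dim T_V\wis{rep}_n B_3 = \dim\mathcal{O}(V)+\dim\Ext^1_{B_3}(V,V)$ at a point $V=\oplus_i\lambda_iS_i$ with $\lambda_i/\lambda_j\notin\pmb{\mu}_6$, using Theorem~\ref{th:exten}. Your extra care about generic finiteness and the explicit sandwich $\dim\overline{\im f}\le\dim_V Z\le\dim T_V\wis{rep}_nB_3$ only makes explicit what the paper leaves implicit.
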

\begin{proof}
Let $X = \wis{PGL}_n(\C)\times^{\left(\prod_{i=1}^k \wis{GL}_{n_i}(\C)\right)/\C^*} \left((\C^*)^k \times \prod_{i=1}^k \wis{rep}_{\alpha_i} \Gamma \right)$ and put $Y = \wis{rep}_n(B_3)$. On the open subset of $X$ defined by $\forall 1 \leq i \leq k: V_i \in \wis{rep}_{\alpha_i} \Gamma$ simple, the map $f_{(\alpha_1,\ldots,\alpha_k)}$ has finite fibers, so we have $\dim \overline{f_{(\alpha_1,\ldots,\alpha_k)}(X)} = \dim X$. In order to prove that $\overline{f_{(\alpha_1,\ldots,\alpha_k)}(X)}$ is indeed a component of $\wis{rep}_n(B_3)$, it is then enough to prove that there exists an element of $x \in X$ such that 
$\dim T_x X =\dim T_{f_{(\alpha_1,\ldots,\alpha_k)}(x)} Y$. Let $V = \oplus_{i=1}^k t_i V_i$ be a semisimple $B_3$-representation such that
\begin{itemize}
\item $V_i \in  \wis{rep}_{\alpha_i} \Gamma$ is a simple $\Gamma$-module and
\item $\frac{t_i}{t_j}\not\in \pmb{\mu}_6$ for all $1\leq i < j \leq k$.
\end{itemize}
Then by Theorem~\ref{th:exten} we have
\begin{align*}
\dim \Ext^1_{B_3}(V,V) &= k+\sum_{i=1}^k \dim \Ext^1_\Gamma(V_i,V_i).
\end{align*}
As both $\left((t_i)^k_1,(V_i)_{i=1}^k \right)$ and $V$ have the same $\wis{PGL}_n(\C)$-stabilizer, we are done, as we have
\begin{align*}
\dim T_{(1,(t_i)^k_1,(V_i)_{i=1}^k)}X &= \dim \mathcal{O}(1,(t_i)^k_{i=1},(V_i)_{i=1}^k) + \dim (\C^*)^k+\sum_{i=1}^k \dim \Ext^1_\Gamma(V_i,V_i)
\\ &= n^2-k+k+\sum_{i=1}^k \dim \Ext^1_\Gamma(V_i,V_i)= n^2 + \sum_{i=1}^k \dim \Ext^1_\Gamma(V_i,V_i),\\
\dim T_V Y &= \dim \mathcal{O}(V) + \Ext^1_{B_3}(V,V) 
\\ &= n^2-k + k+\sum_{i=1}^k \dim \Ext^1_\Gamma(V_i,V_i) = n^2 +\sum_{i=1}^k \dim \Ext^1_\Gamma(V_i,V_i).
\end{align*}
\end{proof}
In fact, this proof shows that the image of elements fulfilling $\frac{t_i}{t_j} \not\in \pmb{\mu}_6$ belong to the smooth locus of a component of $\wis{rep}_n B_3$.
\begin{corollary}
Let $V = \oplus_{i=1}^k S_i^{e_i}$ be a $n$-dimensional semi-simple $B_3$-module and let $S_i = \lambda_i S'_i$ with $S'_i$ a simple $\Gamma$-module, $S'_i \in \wis{rep}_{\alpha_i} \Gamma$. Then $V$ belongs to a component of dimension
$$
n^2+\sum_{i=1}^k e_i \dim \Ext^1_{\Gamma}(S'_i,S'_i).
$$
\end{corollary}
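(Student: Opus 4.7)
The plan is to deduce this as a direct consequence of the preceding theorem by using the list of dimension vectors $(\alpha_1,\ldots,\alpha_1,\alpha_2,\ldots,\alpha_2,\ldots,\alpha_k,\ldots,\alpha_k)$ in which each $\alpha_i$ appears exactly $e_i$ times. Set $K = \sum_{i=1}^k e_i$, and for bookkeeping relabel this list as $(\beta_1,\ldots,\beta_K)$. Since each $\alpha_i$ is a simple dimension vector for $\Gamma$, so is each $\beta_j$, and $\sum_j \dim \beta_j = n$, so the previous theorem applies and produces an irreducible component $C$ of $\wis{rep}_n B_3$, namely the closure of the image of
$$
f_{(\beta_1,\ldots,\beta_K)}:\wis{PGL}_n(\C)\times^{\left(\prod_{j=1}^K \wis{GL}_{n_{i(j)}}(\C)\right)/\C^*}  \left((\C^*)^K \times \prod_{j=1}^K \wis{rep}_{\beta_j} \Gamma \right)\to \wis{rep}_n(B_3).
$$

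Next I would verify that $V$ itself lies in the image of $f_{(\beta_1,\ldots,\beta_K)}$, hence in $C$. For this, pick for every slot corresponding to $\alpha_i$ the same simple $\Gamma$-representative $S'_i$ and the same scalar $\lambda_i \in \C^*$. The resulting point of the parameter space is mapped by $f_{(\beta_1,\ldots,\beta_K)}$ (up to $\wis{PGL}_n(\C)$-conjugation) to the semisimple $B_3$-module $\oplus_{i=1}^k (\lambda_i S'_i)^{e_i} = \oplus_{i=1}^k S_i^{e_i} = V$, so $V \in C$.

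It remains to read off $\dim C$ from the proof of the previous theorem. That proof shows $\dim C = \dim X$, and evaluates $\dim X$ by writing it as $n^2 + \sum_{j=1}^K \dim \Ext^1_\Gamma(V_{i(j)},V_{i(j)})$ at a suitable point. Grouping the $K = \sum_i e_i$ slots according to which $\alpha_i$ they came from, and using that in each slot with index $i$ the extension group is $\Ext^1_\Gamma(S'_i,S'_i)$, this collapses to
$$
\dim C = n^2 + \sum_{i=1}^k e_i \dim \Ext^1_\Gamma(S'_i, S'_i),
$$
which is exactly the claimed dimension.

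There is no real obstacle here beyond notation: the only mild point to verify is that the repeated-list instance of the previous theorem is still a legitimate application, since the distinctness of the dimension vectors was not used in its proof, only their simplicity; and that the specialization of all scalars in a block to a common $\lambda_i$ and all modules in a block to a common $S'_i$ produces $V$ on the nose rather than merely in the closure, which is clear from the definition of the map $f_{(\beta_1,\ldots,\beta_K)}$.
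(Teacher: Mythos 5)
Your proposal is correct and takes essentially the same route as the paper: the paper also applies the preceding theorem to the list in which each $\alpha_i$ is repeated $e_i$ times, observes that $V$ lies in the image of the corresponding map $f$, and extracts the dimension of that component. The only cosmetic difference is in the final count: the paper evaluates the dimension at a generic point of the $(\C^*)^{\sum_i e_i}$-orbit as $n^2-\sum_i e_i+\sum_i e_i\dim \Ext^1_{B_3}(S_i,S_i)$ and then uses $\dim \Ext^1_{B_3}(S_i,S_i)=\dim \Ext^1_{\Gamma}(S'_i,S'_i)+1$, whereas you read off $\dim X$ directly from the theorem's proof; these are the same computation.
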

\begin{proof}
Let $S'_i \in \wis{rep}_{\alpha_i} \Gamma$, then $V$ belongs to the image of the map $f_{(\underbrace{\alpha_1,\ldots,\alpha_1}_{e_1},\ldots,\underbrace{\alpha_k,\ldots,\alpha_k}_{e_k})}$. But then the torus $(\C^*)^{\sum_{i=1}^k e_i}$ acts on $\im(f_{(\underbrace{\alpha_1,\ldots,\alpha_1}_{e_1},\ldots,\underbrace{\alpha_k,\ldots,\alpha_k}_{e_k})})$ such that for a generic element of $\mathbf{t}\in(\C^*)^{\sum_{i=1}^k e_i}$, there are no extensions between any simple subrepresentations of $\mathbf{t}V$. It follows that the dimension of this component is
\begin{align*}
\dim \overline{\im(f_{(\underbrace{\alpha_1,\ldots,\alpha_1}_{e_1},\ldots,\underbrace{\alpha_k,\ldots,\alpha_k}_{e_k})})}&=n^2-\sum_{i=1}^k e_i + \sum_{i=1}^k e_i (\dim \Ext^1_{B_3}(S_i,S_i))\\
&=
n^2+\sum_{i=1}^k e_i \dim \Ext^1_{\Gamma}(S'_i,S'_i).
\end{align*}
\end{proof}

\begin{proposition}
One of the components of $\wis{rep}_n B_3$ is isomorphic to $\wis{GL}_n(\C)$ as $\wis{PGL}_n(\C)$-variety.
\end{proposition}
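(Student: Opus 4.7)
The plan is to exhibit an explicit $\wis{PGL}_n(\C)$-equivariant morphism from $\wis{GL}_n(\C)$ to $\wis{rep}_n B_3$, identify its image as a closed irreducible subvariety isomorphic to $\wis{GL}_n(\C)$, and then recognize this image as an irreducible component of $\wis{rep}_n B_3$ by a tangent-space computation at a single well-chosen point.

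Concretely, I would take $\phi : \wis{GL}_n(\C) \to \wis{rep}_n B_3$ defined by $C \mapsto (C^3, C^2)$. This is well-defined since $(C^3)^2 = C^6 = (C^2)^3$, and it is $\wis{PGL}_n(\C)$-equivariant because conjugation commutes with taking powers. Its image manifestly lies inside the closed subvariety $Z = \{(A, B) \in \wis{rep}_n B_3 \mid AB = BA\}$. Conversely, for $(A, B) \in Z$ the element $C := AB^{-1} \in \wis{GL}_n(\C)$ satisfies $C^2 = A^2 B^{-2} = B$ and $C^3 = CB = A$, so $\phi$ surjects onto $Z$ with regular inverse $(A,B) \mapsto AB^{-1}$. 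Hence $\phi$ identifies $\wis{GL}_n(\C)$ with $Z$ as $\wis{PGL}_n(\C)$-varieties.

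It remains to argue that $Z$, which is closed, irreducible, and of dimension $n^2$, is actually an irreducible component of $\wis{rep}_n B_3$. I would test this at $(I_n, I_n) \in Z$, which corresponds to the semisimple representation $V = S_{1,1}^{\oplus n}$. Theorem~\ref{th:exten}(3), together with the absence of loops at the vertex $a_0$ of the one quiver $Q'$, yields $\dim \Ext^1_{B_3}(S_{1,1}, S_{1,1}) = \dim \Ext^1_\Gamma(S_{1,1}, S_{1,1}) + 1 = 1$, so $\dim T_{(I_n, I_n)} \wis{rep}_n B_3 = \dim \Ext^1_{B_3}(V, V) = n^2$. Consequently $(I_n, I_n)$ is a smooth point of $\wis{rep}_n B_3$, lying in a unique irreducible component of dimension $n^2$; the closed irreducible $n^2$-dimensional subvariety $Z$ passing through this point must coincide with that component. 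The only delicate ingredient is this last dimension match — everything else is direct manipulation of the defining relation $A^2 = B^3$ and the commutativity constraint cutting out $Z$.
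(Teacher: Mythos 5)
Your construction is exactly the paper's: the mutually inverse $\wis{PGL}_n(\C)$-equivariant maps $C \mapsto (C^3,C^2)$ and $(A,B) \mapsto AB^{-1}$ identifying $\wis{GL}_n(\C)$ with the commuting locus $Z=\{(A,B)\mid A^2=B^3,\ AB=BA\}$, and your verification that they are inverse to each other ($C^2=A^2B^{-2}=B$, $C^3=A$) is the same computation. Where you genuinely diverge is in certifying that $Z$ is an irreducible component: the paper invokes its earlier theorem that the closure of the image of $f_{(\alpha_1,\ldots,\alpha_n)}$, for $n$ copies of a one-dimensional simple dimension vector, is a component, and then simply asserts that this component equals the commuting locus; you instead compute the tangent space at the single point $(I_n,I_n)$, i.e.\ at $V=S_{1,1}^{\oplus n}$, using Theorem~\ref{th:exten}(3) and the absence of loops at $a_0$ in $Q'$ to get $\dim T_{(I_n,I_n)}\wis{rep}_n B_3 = n^2\cdot\dim\Ext^1_{B_3}(S_{1,1},S_{1,1}) = n^2 = \dim Z$, so that the unique component through this smooth point must coincide with the closed irreducible $n^2$-dimensional $Z$. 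Both arguments are correct; yours is self-contained modulo Theorem~\ref{th:exten} and the identification of the normal space with $\Ext^1$, and it has the added virtue of actually proving the set-theoretic description of the component that the paper states without justification, while the paper's route gets the component claim for free from its general theorem but leaves the identification of that component with the commuting locus to the reader.
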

\begin{proof}
Let $X$ be the irreducible component of $\wis{rep}_n B_3$ with generic element being a direct sum of 1-dimensional representations. This component can be described as
$$
X = \{(A,B)\in \wis{GL}_n(\C)\times \wis{GL}_n(\C) | A^2=B^3, AB = BA\}.
$$
Define the maps
\begin{align*}
\xymatrix{ \wis{GL}_n(\C) \ar[r]^-{f_1} & X}, \xymatrix{ X \ar[r]^-{f_2} & \wis{GL}_n(\C)}
\end{align*}
as $f_1(G) = (G^3,G^2)$ and $f_2(A,B) = AB^{-1}$. Then $f_1$ and $f_2$ are clearly $\wis{PGL}_n(\C)$-equivariant maps and $f_1 \circ f_2 = Id_X$ and $f_2 \circ f_1 = Id_{\wis{GL}_n(\C)}$.
\end{proof}
\begin{lemma}
The only components of $\wis{rep}_n \Gamma$ equal to the $\wis{PGL}_n(\C)$-orbit of a single simple representation occur in dimension 1 and are equal to the 6 1-dimensional representations of $\Gamma$.
\end{lemma}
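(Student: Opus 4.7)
The approach is to reduce the statement to a Diophantine condition on the dimension vector of a component and then to violate that condition for $n\geq 2$ by producing a forced $1$-dimensional subrepresentation. I would start by decomposing $\wis{rep}_n \Gamma = \bigsqcup_\alpha \wis{rep}_\alpha \Gamma$ over admissible dimension vectors $\alpha = (a,b;x,y,z)$ with $a+b=x+y+z=n$. Each $\wis{rep}_\alpha \Gamma$ is irreducible, being the product of two partial flag varieties $\wis{GL}_n/(\wis{GL}_a\times \wis{GL}_b)$ and $\wis{GL}_n/(\wis{GL}_x\times \wis{GL}_y\times \wis{GL}_z)$, of total dimension $2ab + 2(xy+xz+yz)$. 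Since a simple $\Gamma$-representation has trivial $\wis{PGL}_n(\C)$-stabilizer, its orbit is closed of dimension $n^2-1$, and therefore coincides with its component precisely when
\[
2ab + 2(xy+xz+yz) \;=\; n^2 - 1.
\]

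For $n=1$ the identity degenerates to $0=0$, and the six unit dimension vectors give exactly the six $1$-dimensional characters of $\Gamma$. For $n\geq 2$ I would rule out every solution. Simplicity forces $a,b\geq 1$ (otherwise $X=\pm I$ and $V$ descends to a $\Z_3$-module, impossible for $n\geq 2$), so after relabeling one may assume $a\leq b$ and $x\geq y\geq z$, hence $1\leq a\leq n/2$. The technical heart is the claim that the Diophantine identity then forces $x\geq a+1$. Granting that, $\dim V_X^{-1} + \dim V_Y^1 = b+x > n$, so the intersection $V_X^{-1}\cap V_Y^1$ contains a nonzero vector $v$, and $\langle v\rangle$ is a $1$-dimensional $\Gamma$-subrepresentation of $V$ (with $Xv=-v$ and $Yv=v$), contradicting simplicity.

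The main obstacle is the inequality $x\geq a+1$, which I would prove by contradiction. If $x\leq a$, the ordering forces $x,y,z\leq a$, so $x^2+y^2+z^2\leq a(x+y+z)=an$; then $2(xy+xz+yz) = n^2-(x^2+y^2+z^2) \geq n(n-a)$, and combined with $2ab=2a(n-a)$ this yields
\[
2ab + 2(xy+xz+yz) \;\geq\; (n-a)(n+2a) \;=\; n^2 + a(n-2a) \;\geq\; n^2,
\]
contradicting $n^2-1$ since $a\geq 1$ and $a\leq n/2$. Apart from this convexity-type estimate, the remainder of the argument is routine: the dimension count for $\wis{rep}_\alpha \Gamma$, the observation that two subspaces whose dimensions sum to more than the ambient dimension must intersect nontrivially, and the explicit identification of the $1$-dimensional characters of $\Gamma$.
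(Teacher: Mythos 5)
Your proof is correct, and it takes a genuinely different route from the paper's. The paper works on the other side of the quiver dictionary: it passes to the one quiver $Q'$ of $\Gamma$, writes $\dim \wis{rep}_\alpha \Gamma//\wis{PGL}_n(\C) = 1-\chi_{Q'}(\mathbf{a},\mathbf{a})$ for the associated dimension vector $\mathbf{a}$ of $Q'$, and shows $\chi_{Q'}(\mathbf{a},\mathbf{a})\leq 0$ whenever $\mathbf{a}$ is a simple dimension vector of total dimension $>1$, using the simplicity condition $a_i\leq a_{i-1}+a_{i+1}$ and averaging two estimates of the Euler form. You instead compute $\dim\wis{rep}_\alpha\Gamma = 2ab+2(xy+xz+yz)$ directly as a product of two conjugacy classes, and turn ``component $=$ orbit'' into the identity $2ab+2(xy+xz+yz)=n^2-1$; your convexity estimate then forces $\max\{x,y,z\}>\min\{a,b\}$, and the pigeonhole intersection of eigenspaces produces a $1$-dimensional subrepresentation, contradicting simplicity. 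Note that your last step is in effect an elementary reproof of (the relevant direction of) the paper's condition (\ref{con:simpQ}) on simple dimension vectors, so you could alternatively have concluded by citing that classification; as written, your argument is fully self-contained and avoids the one-quiver machinery and the Euler form altogether. What the paper's route buys in exchange is that it sits inside the formalism used everywhere else and immediately delivers the companion fact quoted right after the lemma, namely that $\dim\Ext^1_{\C Q'}(V,V)=0$ iff $\chi_{Q'}(\mathbf{a},\mathbf{a})=1$ iff $V$ is $1$-dimensional, which is what the main theorem actually consumes; your version recovers this too, but only after observing that $\dim\Ext^1_\Gamma(S,S)=\dim\wis{rep}_\alpha\Gamma-(n^2-1)$ by smoothness of $\wis{rep}_\alpha\Gamma$. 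One small point worth making explicit if you write this up: the closedness of the orbit of a simple representation is not actually needed for the $n\geq 2$ direction (equality of the component with the orbit already forces equality of dimensions), and for $n=1$ each $\wis{rep}_\alpha\Gamma$ is a single point, so the six characters are immediate.
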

\begin{proof}
This amount to proving that $\wis{rep}_\alpha \Gamma//\wis{PGL}_n(\C)$ has dimension $\geq 1$ if $n>1$ and $\alpha$ is a simple dimension vector. If $\alpha = (a,b;x,y,z)$ and $\mathbf{a}=(a_0,a_1,a_2,a_3,a_4,a_5)$ is a dimension vector for $Q'$ such that 
$$
\begin{cases}
a = a_0 + a_2 +a_4,\\
b = a_1 + a_3 +a_5,\\
x = a_0 + a_3\\
y = a_1 + a_4,\\
z = a_2 + a_5,
\end{cases}
$$
then the dimension of the moduli space $\wis{rep}_\alpha \Gamma//\wis{PGL}_n(\C)$ is equal to $1- \chi_{Q'}(\mathbf{a},\mathbf{a})$, with $Q'$ the Euler form on $\Z^6\times \Z^6$ associated to $Q'$, which in this case is determined by the matrix
$$
\begin{bmatrix}
1 & -1 & 0 & 0 & 0 & -1\\
-1 & 1 & -1 & 0 & 0 & 0\\
0 & -1 & 1 & -1 & 0 & 0\\
0 & 0 & -1 & 1 & -1 & 0\\
0 & 0 & 0 & -1 & 1 &-1 \\
-1 & 0 &0 & 0 & -1 & 1
\end{bmatrix},
$$
see for example \cite[Chapter 4]{le2007noncommutative}. In addition, $\wis{rep}_{\mathbf{a}} Q'$ contains simple representations of $\Gamma$ if and only if either $\mathbf{a}$ lies in the $\Z_6$-orbit of $e_1=(1,0,0,0,0,0)$ or $\mathbf{a}$ fulfils the condition
\begin{align}
\forall i \in \Z_6: a_i \leq a_{i-1}+a_{i+1},
\label{con:simp}
\end{align}
indices taken in $\Z_6$. In the case that $\wis{supp}(\mathbf{a})$ is the union of 2 consecutive vertices that fulfil condition \ref{con:simp}, the dimension vector has to lie in the $\Z_6$-orbit of $(1,1,0,0,0,0)$. For $\alpha$ itself this boils down to
\begin{align}
\max\{x,y,z\}\leq \min\{a,b\}
\label{con:simpQ}
\end{align} except if either $x$, $y$ or $z$ is equal to $0$, in which case $\alpha$ has to lie in the $\Z_6$-orbit of either $(1,0;1,0,0)$ or $(1,1;1,1,0)$. A quick calculation shows that $\chi_{Q'}(e_{1},e_{1})=1$. So assume that $\mathbf{a}$ fulfils condition \ref{con:simp} and that $\chi_{Q'}(\mathbf{a},\mathbf{a})=1$. We have
\begin{align*}
\chi_{Q'}(\mathbf{a},\mathbf{a})&= \sum_{i=0}^5 a_i^2 - \sum_{i=0}^2 2a_{2i}(a_{2i-1}+a_{2i+1})\leq \sum_{i=0}^2 a_{2i}^2-\sum_{i=0}^2 a_{2i+1}^2\\
&=\sum_{i=0}^5 a_i^2 - \sum_{i=0}^2 2a_{2i+1}(a_{2i}+a_{2(i+1)})\leq \sum_{i=0}^2 a_{2i+1}^2-\sum_{i=0}^2 a_{2i}^2
\end{align*}
which is impossible because this would imply that $1\leq 0$.
\end{proof}
In addition, we also have for representations $V \in \wis{rep}_{\mathbf{a}} Q', W\in \wis{rep}_{\mathbf{b}} Q'$ that
$$
\dim \Hom_{\C Q'}(V,W) - \dim \Ext^1_{\C Q'}(V,W) = \chi_{Q'}(\mathbf{a},\mathbf{b}),
$$
see for example \cite[Chapter 4]{le2007noncommutative}. In particular, if $V$ is simple, then  $\dim \Ext^1_{\C Q'}(V,V)$ is $0$ if and only if $\chi_{Q'}(\mathbf{a},\mathbf{a})=1$, which we have just shown is only true if $V$ is 1-dimensional.
Now we can prove the main theorem.
\begin{theorem}
Let $V = \oplus_{i=1}^k S_i^{e_i}$ be a semisimple $n$-dimensional representation of $V$ with $S_i = \lambda_i S'_i$ with $S'_i \in \wis{rep}_{\alpha_i} \Gamma$ simple. Then $\wis{rep}_n B_3$ is smooth in $V$ if and only if
\begin{itemize}
\item $\forall 1\leq i < j \leq k: \Ext^1_{B_3} (S_i,S_j) = 0$ and
\item $\forall 1\leq i \leq k: \dim S_i = 1 \text{ or } e_i = 1$.
\end{itemize}
\label{maintheorem}
\end{theorem}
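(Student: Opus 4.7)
The plan is to compare the Zariski tangent space dimension at $V$ with the dimensions of the irreducible components of $\wis{rep}_n B_3$ through $V$, using that $V$ is a smooth point if and only if it lies on a unique component $C$ and $\dim T_V \wis{rep}_n B_3 = \dim C$, equivalently if and only if the local ring at $V$ is regular.

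First I compute $\dim T_V \wis{rep}_n B_3 = \dim \mathcal{O}(V) + \dim \Ext^1_{B_3}(V,V)$. Since the $\wis{PGL}_n(\C)$-stabilizer of $V$ is $\prod_i \wis{GL}_{e_i}(\C)/\C^*$, we have $\dim \mathcal{O}(V) = n^2 - \sum_i e_i^2$. Expanding $\Ext^1_{B_3}(V,V)$ bilinearly and applying Theorem~\ref{th:exten} (which adds $+1$ on every diagonal piece) yields
\[
\dim T_V \wis{rep}_n B_3 = n^2 + \sum_i e_i^2 \dim \Ext^1_\Gamma(S'_i,S'_i) + \sum_{i\neq j} e_ie_j \dim \Ext^1_{B_3}(S_i,S_j).
\]
By the corollary to the components theorem, $V$ lies in a component $C$ of dimension $n^2 + \sum_i e_i \dim \Ext^1_\Gamma(S'_i,S'_i)$, so the defect $\Delta := \dim T_V - \dim C$ splits as an off-diagonal part $\sum_{i\neq j} e_ie_j \dim \Ext^1_{B_3}(S_i,S_j)$ plus a diagonal part $\sum_i (e_i^2-e_i) \dim \Ext^1_\Gamma(S'_i,S'_i)$, each manifestly non-negative. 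Using the lemma's characterization that $\dim \Ext^1_\Gamma(S'_i,S'_i) = 0$ exactly when $\dim S_i = 1$, these sums vanish simultaneously precisely under conditions (a) and (b). The backward direction then follows immediately: under (a) and (b) the chain $\dim C \leq \dim_V \wis{rep}_n B_3 \leq \dim T_V = \dim C$ collapses, so the local ring at $V$ is regular and $V$ is smooth.

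For the forward direction I argue contrapositively by exhibiting additional components. If (a) fails, fix $i \neq j$ with $\Ext^1_{B_3}(S_i,S_j) \neq 0$; by Theorem~\ref{th:exten} one has $\lambda_i/\lambda_j \in \pmb{\mu}_6$ and, after a common rescaling, $\Ext^1_\Gamma(S'_i, S'_j) \neq 0$. The combinatorial simplicity test of the preceding lemma verifies that $\alpha_i + \alpha_j$ is a simple dim vector of $\Gamma$ (the inequality $a_i \leq a_{i-1}+a_{i+1}$ is preserved under addition), so there is a second component $C'$ of $\wis{rep}_n B_3$ through $V$ whose generic representative merges $S_i$ and $S_j$ into a single $(n_i+n_j)$-dimensional simple; then $V \in C \cap C'$ is singular. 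If (a) holds but (b) fails at some $i_0$ (so $e_{i_0} \geq 2$ and $\dim S_{i_0} \geq 2$), the alternative components through $V$ come from partitioning $e_{i_0}$ into parts $f_1,\ldots,f_r$ and replacing $S_{i_0}^{e_{i_0}}$ by simples of dim vectors $f_t\alpha_{i_0}$ (all simple dim vectors by preservation of the inequality). A direct calculation with $\dim \Ext^1_\Gamma(W,W) = 1 - \chi_{Q'}(f\mathbf{a}_{i_0}, f\mathbf{a}_{i_0})$ for a generic simple $W$ of dim vector $f\alpha_{i_0}$ shows that every such candidate $C''$ satisfies $\dim T_V - \dim C'' \geq e_{i_0}(e_{i_0}-1) \geq 2$, so $V$ is singular on every component containing it.

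The main obstacle is the forward direction's second case, namely ensuring that no coalescence of copies of a fixed simple $S_{i_0}$ into a larger simple can close the gap $\Delta > 0$. This reduces to a quadratic comparison over partitions of $e_{i_0}$ via the Euler form of $Q'$, and its positivity ultimately rests on the elementary inequality $e^2 > e$ for $e \geq 2$.
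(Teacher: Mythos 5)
Your computation of $\dim T_V \wis{rep}_n B_3$, the identification of the component $C$ of dimension $n^2+\sum_i e_i\dim\Ext^1_\Gamma(S'_i,S'_i)$ through $V$, and the decomposition of the defect $\Delta=\dim T_V-\dim C$ into the non-negative pieces $\sum_i e_i(e_i-1)\dim\Ext^1_\Gamma(S'_i,S'_i)$ and $\sum_{i\neq j}e_ie_j\dim\Ext^1_{B_3}(S_i,S_j)$ all match the paper, and your backward direction is exactly the paper's. The issue is your forward direction, which is both an unnecessary detour and, as written, contains a real gap. Since you have already placed $V$ on the component $C$, smoothness of $V$ would force $C$ to be the \emph{unique} component through $V$ and hence $\dim T_V=\dim C$, i.e.\ $\Delta=0$; that single observation (which is how the paper argues) finishes the forward direction with no need to exhibit any further components.

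Your substitute argument requires knowing \emph{all} irreducible components of $\wis{rep}_n B_3$ through $V$. In the case where (a) holds but (b) fails you assert that ``the alternative components through $V$ come from partitioning $e_{i_0}$,'' but neither you nor the paper proves that the closures of the images of the maps $f_{(\alpha_1,\ldots,\alpha_k)}$ exhaust the components of $\wis{rep}_n B_3$, so concluding that $V$ is singular from a dimension comparison over that particular list is unjustified; and the paper's closing remark explicitly warns that for $\dim S_{i_0}=2$, $e_{i_0}\geq 2$ the point $V$ need not lie on a second component of this kind at all, so the ``exhibit another component'' strategy genuinely cannot be completed there. A second, smaller inaccuracy: the claim that $\alpha_i+\alpha_j$ is a simple dimension vector ``because the inequality $a_i\leq a_{i-1}+a_{i+1}$ is preserved under addition'' does not cover the one-dimensional simples, whose dimension vectors are the exceptional single-vertex ones that are simple \emph{without} satisfying that inequality; the correct justification must use the non-vanishing of $\Ext^1_\Gamma$ between the relevant simples. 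What you are reconstructing in this part of your argument is really the paper's \emph{subsequent} theorem on intersections of components, which is a separate and strictly stronger statement than anything the main theorem needs.
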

\begin{proof}
We have shown that $V$ belongs to a component of dimension
$$
n^2+\sum_{i=1}^k e_i \dim \Ext^1_{\Gamma}(S'_i,S'_i).
$$
So $\wis{rep}_n B_3$ is smooth in $V$ if and only if the dimension of the tangent space in $V$ to $\wis{rep}_n B_3$ is equal to this number. Calculating the dimension of the tangent space, we find
\begin{align*}
\dim T_V \wis{rep}_n B_3  &= n^2-\sum_{i=1}^k e_i^2+\sum_{i=1}^k e_i^2 \dim \Ext^1_{B_3}(S_i,S_i) +  \sum_{1\leq i<j\leq k} 2e_ie_j \dim \Ext^1_{B_3}(S_i,S_j)\\
&=n^2+\sum_{i=1}^k e_i^2 \dim \Ext^1_{\Gamma}(S'_i,S'_i) + \sum_{1\leq i< j\leq k} 2e_ie_j \dim \Ext^1_{B_3}(S_i,S_j).
\end{align*}
So $\wis{rep}_n B_3$ is smooth in $V$ if and only if
$$
\sum_{i=1}^k e_i(e_i-1)\dim \Ext^1_{\Gamma}(S'_i,S'_i) +\sum_{1\leq i< j\leq k} 2e_ie_j \dim \Ext^1_{B_3}(S_i,S_j) = 0.
$$
As this is a sum of positive numbers, this can only be 0 if each term is 0, which already leads to the first condition of the theorem. It additionally follows that 
$$\forall 1 \leq i \leq k: e_i=1\text{ or } \dim\Ext^1_{\Gamma}(S'_i,S'_i)=0.$$
So if $e_i \neq 0$, then $\dim S'_i = 1$ in light of the previous lemma.
\end{proof}
Most of the singularities are the consequence of the following theorem.
\begin{theorem}
If $V= \oplus_{i=1}^k S_i^{e_i}$ is a singular point of $\wis{rep}_n B_3$ such that
\begin{itemize}
\item either $\exists 1\leq i < j \leq k: \Ext^1_{B_3} (S_i,S_j) \neq 0$ or
\item $\exists 1\leq i \leq k: \dim S_i \geq 3 \text{ and } e_i \geq 2$.
\end{itemize}
Then $V$ lies on the intersection of 2 components.
\end{theorem}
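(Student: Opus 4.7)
I would exhibit two distinct irreducible components $C_1, C_2$ of $\wis{rep}_n B_3$ both containing $V$, where $C_1 = \overline{\im f_{(\alpha_1^{e_1},\ldots,\alpha_k^{e_k})}}$ is the canonical component from the preceding theorem. The component $C_2$ comes from \emph{merging} two summands of $V$: in case (i), after rescaling via Theorem~\ref{th:exten}(2) so that $\lambda_i = \lambda_j$ and $\Ext^1_\Gamma(S'_i,S'_j) \neq 0$ with $S'_i \not\cong S'_j$, I replace $(\alpha_i, \alpha_j)$ by $\beta = \alpha_i + \alpha_j$; in case (ii), I replace $(\alpha_i,\alpha_i)$ by $\beta = 2\alpha_i$. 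Setting
\begin{align*}
C_2 &= \overline{\im f_{(\beta,\,\alpha_i^{e_i-1},\,\alpha_j^{e_j-1},\,\ldots)}} &&\text{(case i),}\\
C_2 &= \overline{\im f_{(\beta,\,\alpha_i^{e_i-2},\,\ldots)}} &&\text{(case ii),}
\end{align*}
the multisets of simple dimension vectors indexing $C_1$ and $C_2$ are distinct, so $C_1 \neq C_2$.

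The central technical point is to show that $\beta$ is itself a simple dimension vector for $\Gamma$. Case (ii) is immediate: by the preceding lemma, $\dim S'_i \geq 3$ forces $\alpha_i$ to have full support and to satisfy condition (\ref{con:simpQ}), and both properties descend to $2\alpha_i$. Case (i) requires a short analysis: the inequalities (\ref{con:simp}) on $Q'$ are preserved under sums, so $\beta$ can fail to be simple only if its support consists of exactly two consecutive vertices of $Q'$ while $\beta$ does not lie in the $\Z_6$-orbit of $(1,1,0,0,0,0)$. Enumerating the pairs of simple dimension vectors $\alpha_i,\alpha_j$ producing such a support (namely a $1$-dimensional $e_w$ paired with a $2$-dimensional $e_v+e_{v+1}$ where $w \in \{v,v+1\}$, or two $2$-dimensional dimension vectors with identical support), and evaluating the matrix of $\chi_{Q'}$ from the preceding lemma in each configuration, one obtains $\chi_{Q'}(\mathbf{a}_i,\mathbf{a}_j) = 0$ together with $\Hom_\Gamma(S'_i,S'_j) = 0$ (since the simples are either distinct simples of the same dimension vector, or of different dimensions with no submodule relation). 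Therefore $\Ext^1_\Gamma(S'_i,S'_j) = 0$, contradicting the hypothesis.

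With $\beta$ known to be simple, the remainder is a standard degeneration argument. Pick a simple $T' \in \wis{rep}_\beta \Gamma$; by irreducibility of $\wis{rep}_\beta \Gamma$ and density of its simple locus, the semisimple $\Gamma$-module $S'_i \oplus S'_j$ (respectively $S'_i \oplus S'_i$), being a $\Gamma$-representation of dimension vector $\beta$, lies in the closure of the simple locus. Choosing a one-parameter family $T'_t$ of simples with limit this semisimple and combining with the scalar $\lambda_i$ and the other summands of $V$, one obtains a family in $C_2$ whose limit is $\wis{PGL}_n(\C)$-conjugate to $V$. This places $V \in C_1 \cap C_2$. The main obstacle is the support case analysis for $\beta$ in case (i); the remaining steps are routine applications of irreducibility and density of the simple locus in quiver representation varieties.
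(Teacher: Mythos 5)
Your proposal follows the same route as the paper: exhibit $V$ as a point of the ``canonical'' component $\overline{\im f_{(\alpha_1^{e_1},\ldots,\alpha_k^{e_k})}}$ and of a second component obtained by merging two summands into $\alpha_i+\alpha_j$ (resp.\ $2\alpha_i$), the only real content being that the merged dimension vector is again simple. In fact you go further than the paper, which simply asserts that $\alpha_1+\alpha_2$ is a simple dimension vector; your case (ii) and your degeneration/membership argument are fine (note that $V$ lies \emph{directly} in the image of the merged map, since $f$ is defined on all of $\wis{rep}_\beta\Gamma$ and $S'_i\oplus S'_j$ has dimension vector $\beta$, so the one-parameter family is not needed).

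There is, however, one incomplete step in your verification that $\beta=\alpha_i+\alpha_j$ is simple. The claim that condition (\ref{con:simp}) is preserved under sums, so that $\beta$ can only fail to be simple when its support is two consecutive vertices, is not correct as stated: a vertex simple $e_w$ does \emph{not} itself satisfy (\ref{con:simp}), and if $\mathbf{a}$ is the local dimension vector of a simple of dimension $\geq 3$ with $a_w=a_{w-1}+a_{w+1}$ (e.g.\ $\mathbf{a}=(1,2,1,1,1,1)$, $w=1$), then $e_w+\mathbf{a}$ has full support yet violates (\ref{con:simp}) at $w$. This case is missing from your enumeration. It is repaired by the identity $\dim\Ext^1_\Gamma(S_w,S')=-\chi_{Q'}(e_w,\mathbf{a})=a_{w-1}+a_{w+1}-a_w$ (since $\Hom_\Gamma$ vanishes between non-isomorphic simples): the hypothesis $\Ext^1\neq 0$ is exactly the strict inequality $a_w+1\leq a_{w-1}+a_{w+1}$ needed at the vertex $w$, while at every other vertex the inequality for $\mathbf{a}$ carries over to $e_w+\mathbf{a}$. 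With that observation inserted, your argument is complete and matches the paper's.
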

\begin{proof}
Let $S_i = \lambda_i S'_i$ with $S'_i \in \wis{rep}_{\alpha_i} \Gamma$ as before. In the first case, we may assume that $(i,j)=(1,2)$ and that $\lambda_1 = \lambda_2=1$. Then $V$ belongs to the image of 
$$f_{(\underbrace{\alpha_1,\ldots,\alpha_1}_{e_1},\ldots,\underbrace{\alpha_k,\ldots,\alpha_k}_{e_k})}$$
and 
$$f_{(\alpha_1+\alpha_2,\underbrace{\alpha_1,\ldots,\alpha_1}_{e_1-1},\underbrace{\alpha_2,\ldots,\alpha_2}_{e_2-1},\ldots,\underbrace{\alpha_k,\ldots,\alpha_k}_{e_k})}.$$
$\alpha_1+\alpha_2$ is also a simple dimension vector of $\Gamma$, so the claim follows.
\par In the second case, we may again assume that $i=1$ and that $S_1$ is a $\Gamma$-module. If $\dim S_1\geq 3$, then $2 \alpha_i$ also fulfils requirement \ref{con:simpQ} and $V$ lies on the intersection of the image of 
$$f_{(\underbrace{\alpha_1,\ldots,\alpha_1}_{e_1},\ldots,\underbrace{\alpha_k,\ldots,\alpha_k}_{e_k})}$$
and 
$$f_{(2\alpha_1,\underbrace{\alpha_1,\ldots,\alpha_1}_{e_1-2},\underbrace{\alpha_2,\ldots,\alpha_2}_{e_2-1},\ldots,\underbrace{\alpha_k,\ldots,\alpha_k}_{e_k})}.$$
\end{proof}
However, if there exists an $S$ of dimension 2 in the decomposition of $V$ with multiplicity $\geq 2$, then $V$ is still a singular point of $\wis{rep}_n B_3$, but $V$ is not necessarily the intersection of 2 components with generically semisimple elements.


\begin{thebibliography}{10}

%A
\bibitem{adriaenssens2003non}
Jan Adriaenssens and Lieven Le Bruyn, {\it Non-commutative covers and the modular group}, arXiv preprint math.RA/0307139 (2003)
\bibitem{adriaenssens2005trees}
Jan Adriaenssens and Lieven Le Bruyn, {\it Trees of semi-simple algebras}, arXiv preprint math/0507503 (2005)

%L
\bibitem{bruyn2011matrix}
Lieven Le Bruyn, {\it Matrix transposition and braid reversion}, Matrix transposition and braid reversion (2011)
\bibitem{bruyn2013most}
Lieven Le Bruyn, {\it Most irreducible representations of the 3-string braid group}, arXiv preprint arXiv:1303.4907 (2013)
\bibitem{le2007noncommutative}
Lieven Le Bruyn, {\it Noncommutative geometry and Cayley-smooth orders}, CRC Press (2007)
\bibitem{bruyn2003one}
Lieven Le Bruyn, {\it One quiver to rule them all}, arXiv preprint math/0304196, (2003)
\bibitem{bruyn2010rationality}
Lieven Le Bruyn, {\it Rationality and dense families of $B_3$ representations}
arXiv preprint arXiv:1003.1610 (2010)
\end{thebibliography}
\end{document}